\documentclass{article}
\usepackage[utf8]{inputenc}
\usepackage{amssymb}
\usepackage{amsthm}
\usepackage{amsmath}
\usepackage{amsfonts}
\usepackage{url}
\usepackage{hyperref}
\usepackage{graphicx}
\newtheorem{thm}{Theorem}[section]

\newtheorem{prop}[thm]{Proposition}
\newtheorem{lem}[thm]{Lemma}
\newtheorem{conj}[thm]{Conjecture}

\theoremstyle{definition}
\newtheorem{defn}[thm]{Definition}

\newtheorem{exmp}[thm]{Example}

\theoremstyle{remark}
\newtheorem{rem}[thm]{Remark}

\title{On an existence problem of periodic points in intervals whose images cover themselves}
\author{Wang Yihan\\2001110016@pku.edu.cn}
\date{April 2022}

\begin{document}

\maketitle

\begin{abstract}
We study a system of intervals $I_1,\ldots,I_k$ on the real line whose images under a continuous map $f$ contain themselves. It's conjectured that there exists a periodic point of period $\le k$ in $I_1\cup \ldots \cup I_k$. We prove the conjecture for $k=5$ in this paper. We also propose a discretization method in attempt to solve the problem.
\end{abstract}

\section{Introduction}

We consider a continuous mapping $f$: $\mathbb{R} \to \mathbb{R}$, and $k$ closed intervals $I_1,I_2,...,I_k$. Throughout this paper, $f^l$ denotes the $l$-th iteration of $f$. 

\begin{defn}
We call $(I_1,I_2,\ldots ,I_k;f)$ a $covering\ system$ if $f(I_1 \cup I_2 \cup \ldots \cup I_k)\supseteq I_1 \cup I_2 \cup \ldots \cup I_k$.
\end{defn}

This concept was first introduced by S. A. Bogatyi and E. T. Shavgulidze in their paper \cite{1988Periodic}, when they tried to obtain an analogue of Sharkovskii's theorem \cite{A1995COEXISTENCE} for an arbitrary tree. They proved the existence of a periodic point of period $\le g(n)$ in a covering system and established an upper bound for $g(n)$ $(g(n)\le 2(n^2-1)g(n-1)^{n-2})$. Moreover, they proposed the following conjecture in the remark. 

\begin{conj}
For any covering system $(I_1,I_2,\ldots,I_k;f)$, there exists a point $x_0\in I_1 \cup I_2 \cup \ldots \cup I_k$ such that $f^l(x_0)=x_0$ for some $l\le k$.
\end{conj}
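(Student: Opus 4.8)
The plan is to reduce everything to the classical covering (itinerary) lemma: if closed intervals $J_0,J_1,\ldots,J_{n-1}$ satisfy $f(J_t)\supseteq J_{t+1}$ (indices mod $n$), then there is a point $x$ with $f^t(x)\in J_t$ for all $t$ and $f^n(x)=x$, whose least period divides $n$ and is therefore at most $n$. Granting this lemma, the conjecture follows as soon as one produces, inside the covering system, a cycle of length at most $k$ in the $f$-covering relation $A\to B\iff f(A)\supseteq B$ among a suitable finite family of subintervals of $U:=I_1\cup\cdots\cup I_k$. So the first step is to record this reduction and to isolate the real difficulty: the hypothesis only gives $f(U)\supseteq U$, i.e. covering \emph{by the union}, whereas the lemma needs covering interval-by-interval.

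The natural first attempt is a functional-graph argument. For each $j$ let $a_j$ denote the left endpoint of $I_j$; since $a_j\in U\subseteq\bigcup_i f(I_i)$, choose $\phi(j)$ with $a_j\in f(I_{\phi(j)})$. Iterating the self-map $\phi$ of $\{1,\ldots,k\}$ yields a cycle of length at most $k$ along which consecutive intervals satisfy the \emph{partial} covering relation $f(I_{\phi(j)})\ni a_j$. The one structural fact that makes such partial data powerful is connectivity: each $f(I_i)$ is an interval, so if it meets $I_j$ yet fails to contain it, it must miss a definite endpoint; equivalently, if $f(I_i)$ contains points strictly on both sides of $I_j$ then $f(I_i)\supseteq I_j$ outright. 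I would try to use this, together with an intermediate-value / degree argument tracking how the chosen endpoints rotate around the cycle, to upgrade the partial cycle into a genuine covering cycle of comparable length. The hard part — and, I expect, the reason the conjecture is open in general — is precisely this upgrade: partial covers do not compose, and forcing genuine covers can require subdividing the $I_j$, which inflates the vertex count and threatens the bound $k$ on the cycle length.

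To control this I would discretize. Cut $U$ at the endpoints of the $I_j$ and at the $f$-preimages of these endpoints, and replace $f$ — without altering any covering relation among the resulting cells — by a piecewise-monotone (indeed piecewise-linear) model; this is legitimate because, for finitely many intervals, only the combinatorial type of $f$ matters, namely the linear order of the relevant points and the position of each cell's image relative to them, and images of intervals are intervals, so each cell is swept onto a contiguous block of cells on each monotone lap. In this finite model the hypothesis becomes ``every cell of $U$ lies in the image of some cell,'' and the conjecture becomes a purely combinatorial assertion: every such configuration of contiguous, order-respecting covers admits a covering cycle of length at most $k$. The connectivity fact above now reads as a prohibition against ``covering across'' a cell without covering it, which sharply constrains the admissible configurations.

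For the specific case $k=5$ I would then enumerate the finitely many combinatorial types compatible with these order and contiguity constraints and check that each one contains a covering cycle of length at most $5$. I expect the principal obstacle to be twofold: the combinatorial explosion of types, and the configurations that superficially seem to admit only long cycles. The latter must be eliminated by exploiting the geometry the discrete model inherits from continuity — connected, order-respecting images on each lap — to show that any apparently long forced cycle can be short-circuited into one of length at most $5$, or else contradicts the covering hypothesis. This is exactly the point at which a uniform argument for all $k$ seems to break down and a finite, $k$-specific analysis becomes unavoidable.
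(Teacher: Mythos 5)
Your overall strategy coincides with the paper's own treatment of this statement, so let me first set the frame: the statement is an open conjecture in the paper, not a theorem --- the paper proves it only for $k\le 5$ (Theorem 1.3, via a case analysis that is itself largely omitted) and otherwise \emph{reduces} it to a discrete conjecture (Conjecture 1.4, via Theorem 1.5). Your three steps --- covering cycles yield periodic points, discretization to a finite combinatorial model, and a finite case-check for $k=5$ --- are exactly the paper's Examples 2.1/2.3, its Section 3, and its Section 2, respectively, and you correctly identify the central obstruction (partial covers do not compose). So the route is the same; the problem is one specific step that fails as you state it.

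The genuine gap is in the discretization. You cut $U$ at the endpoints of the $I_j$ and at their $f$-preimages and claim the resulting finite model is faithful, with ``each cell swept onto a contiguous block of cells'' and the hypothesis becoming ``every cell of $U$ lies in the image of some cell.'' This does not follow: for cell images to be unions of cells, the cut set must be forward-invariant under $f$ (up to $k$ iterates, modulo leaving $U$). Cutting at preimages only aligns the images of the cut points that \emph{are} preimages; the images of the original endpoints themselves land mid-cell, so the image of a cell can terminate in the interior of another cell. Consequently $f(U)\supseteq U$ need not descend to the cell-level covering hypothesis: a cell can be covered by the union of several cell-images while being contained in none of them --- the very difficulty you isolated, reappearing at the finite level. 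Worse, forward orbits of the endpoints are generally infinite, so no finite cut set makes $f$ itself cell-respecting. The paper closes this gap with a perturbation argument (its Lemma 3.1): arguing by contradiction, the property ``no periodic point of period $\le k$ in $U$'' is stable under $C^0$-small perturbations; one iterates the endpoint set forward, truncating points that leave $U$, until by compactness every newly created point is $\delta$-close to the existing set, and then one perturbs $f$ so that this finite set becomes invariant. Only the perturbed map --- still a counterexample --- is genuinely discrete. Your assertion that ``only the combinatorial type of $f$ matters'' is precisely what this argument establishes and cannot be assumed. Finally, even granting a correct discretization, the resulting combinatorial statement is itself only conjectural in the paper (Conjectures 1.4 and 1.7), and the $k=5$ enumeration is carried out in full neither in your sketch nor in the paper; so no complete proof of the statement exists on either side.
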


It is proved in \cite{1988Periodic} that Conjecture 1.2 is true for $k\le 4$. By similar but more detailed discussions, we can extend the results to $k=5$. And we obtain the first main theorem of this paper.

\begin{thm}
For any covering system $(I_1,\ldots,I_5;f)$, there exists a periodic point of period $\le 5$ in $I_1 \cup \ldots \cup I_5$.
\end{thm}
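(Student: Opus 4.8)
The plan is to work with the standard \emph{covering relation}: for closed intervals $A,B$ I write $A\to B$ when $f(A)\supseteq B$, and I will rely on the classical loop lemma, that a cyclic chain $J_0\to J_1\to\cdots\to J_{m-1}\to J_0$ forces a point $x$ with $f^t(x)\in J_t$ for every $t$ and $f^m(x)=x$, hence a periodic point of period dividing $m$. Since any cycle through distinct intervals among the five has length at most $5$, it suffices to exhibit one such cycle inside the covering digraph determined by $(I_1,\ldots,I_5;f)$.

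First I would record the reductions. Overlapping intervals can be merged, and an interval contained in the union of the others can be discarded, so I may assume the $I_j$ are pairwise disjoint and relabel them left to right as $I_1<I_2<\cdots<I_5$ on the line; the reflection $x\mapsto -x$ lets me identify configurations in pairs and roughly halve the casework. Next, arguing by contradiction, I assume there is no periodic point of period $\le 5$. In particular no $I_j$ contains a fixed point, so by the intermediate value theorem $f(x)-x$ has constant sign on each $I_j$; I label $I_j$ as $+$ when $f>\mathrm{id}$ there and $-$ when $f<\mathrm{id}$ there, and note $I_j\not\subseteq f(I_j)$ for every $j$ (otherwise $I_j\to I_j$ gives a fixed point). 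The $k\le4$ result remains available to dispose of any sub-configuration that already forms a covering system on fewer than five components.

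The core of the argument converts the union hypothesis $f(I_1\cup\cdots\cup I_5)\supseteq I_1\cup\cdots\cup I_5$ into combinatorial data. The essential point is that the hypothesis does \emph{not} supply a single interval fully covering a given $I_j$; rather, each connected target $I_j$ is spanned by a \emph{chain} of images $f(I_i)$ that overlap consecutively. I would seed the analysis at the two global extremes: the leftmost point $\min\bigcup I_j$ and the rightmost point $\max\bigcup I_j$ each lie in some $f(I_i)$, and the sign labels together with $I_j\not\subseteq f(I_j)$ pin down how far each image must reach. Propagating these constraints with the intermediate value theorem, I would build a directed graph of forced covering relations among the five intervals, refining intervals at preimages of endpoints to turn partial covers into genuine $\to$ relations when a single image does not suffice, and then show this graph must contain a cycle of length $\le5$, which produces the desired periodic point and contradicts the assumption.

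I expect the main obstacle to be precisely this conversion step under \emph{shared} covers, where no single $I_i$ covers a target and the spanning chain has length greater than one: here one must split the offending interval at a preimage of an endpoint and track the orientation of $f$ across the splitting point so that the resulting sub-interval relations still close up into a short loop. The remaining difficulty is bookkeeping: for $k=5$ the number of left-right orders crossed with $\pm$ sign patterns and with the choices of which image covers which endpoint is large, and each branch must be checked either to yield a cycle of length $\le5$ or to reduce to a genuine covering system on $\le4$ components handled by the known case. Confirming that every branch terminates, and that the periodic points produced genuinely lie in $\bigcup I_j$ with period not exceeding $5$, is what makes the $k=5$ analysis ``more detailed'' than its predecessors.
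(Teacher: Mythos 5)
Your proposal takes essentially the same route as the paper: reduce to disjoint, minimal configurations, track where the interval endpoints map, and use IVT-driven covering chains ($f^l(I_j)\supseteq I_j$, i.e.\ cycles in the covering digraph, with refinement at preimages of endpoints as in the paper's Lemma 2.4) to produce a periodic point of period $\le 5$, with everything else a finite case analysis over endpoint orderings. Like the paper itself --- which in Remark 2.9 claims without proof that its roughly $(5!)^2$ cases all succumb to the methods of Examples 2.6--2.8 --- you leave the exhaustive casework unexecuted, so your sketch sits at the same level of completeness as the paper's own argument.
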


Inspired by the process of proof for $k=5$, we develop new methods which relate the initial problem to a discrete one. Indeed, we propose the following conjecture and show that Conjecture 1.2 is a corollary of it. 

\begin{conj}
For any mapping $f$: $\{1,2,\ldots,n\}$ $\to$ subsets of $\{1,2,\ldots,n\}$ such that $\bigcup_{i=1}^n f(i)=\{1,2,\ldots,n\}$, and for any partition 
$$I_1=\{1,2,\ldots,i_1\},\quad I_2=\{i_1+1,\ldots,i_2\},\quad \ldots,\quad I_k=\{i_{k-1}+1,\ldots,n\},$$
there exists $j\in \{1,2,\ldots,k\}$ and $r,s\in I_j$ such that
$$(convf)^l(\{r,s\})\supseteq \{r,s\},\quad for\ some \quad l\le k.$$
Here, $convf(A)=$ convex hull of $f(A)=\{\min f(A),\ldots,\max f(A)\}$ for any finite set $A\subseteq \mathbb{N}$.
\end{conj}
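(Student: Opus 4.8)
The plan is to recast the set-valued map through its extreme values and thereby reduce the conjecture to a statement about directed cycles in a graph on the blocks. First I would note that only convex hulls matter: writing $L(m)=\min f(m)$ and $R(m)=\max f(m)$, one has $convf(A)=[\min_{m\in A}L(m),\ \max_{m\in A}R(m)]$ for every finite $A$, so replacing each $f(m)$ by the integer interval $[L(m),R(m)]$ alters neither $convf$ nor the hypothesis $\bigcup_m f(m)=\{1,\dots,n\}$, which now reads: the $n$ intervals $[L(m),R(m)]$ cover $\{1,\dots,n\}$. Write $g=convf$; it is monotone ($A\subseteq B\Rightarrow g(A)\subseteq g(B)$) and maps integer intervals to integer intervals. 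For each block put $J_a=g(I_a)=[\lambda(a),\rho(a)]$ with $\lambda(a)=\min_{m\in I_a}L(m)$ and $\rho(a)=\max_{m\in I_a}R(m)$, and define a digraph $H$ on $\{1,\dots,k\}$ with an edge $a\to b$ exactly when $J_a\supseteq I_b$.

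The main lever is a sufficient condition: \emph{a directed cycle in $H$ produces a periodic pair}. Given a cycle $a_0\to a_1\to\cdots\to a_{l-1}\to a_0$ (so $g(I_{a_j})\supseteq I_{a_{j+1}}$, indices mod $l$, $l\le k$), choose $r,s\in I_{a_0}$ attaining $L(r)=\lambda(a_0)$ and $R(s)=\rho(a_0)$. Since these are the extremal values over the whole block, $\min(L(r),L(s))=\lambda(a_0)$ and $\max(R(r),R(s))=\rho(a_0)$, whence
\[
g(\{r,s\})=[\min(L(r),L(s)),\,\max(R(r),R(s))]=[\lambda(a_0),\rho(a_0)]=g(I_{a_0})\supseteq I_{a_1}.
\]
Monotonicity then gives $g^{j}(\{r,s\})\supseteq I_{a_j}$ by induction, so $g^{l}(\{r,s\})\supseteq I_{a_0}\supseteq\{r,s\}$ with $r,s$ in the common block $I_{a_0}$. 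This settles the conjecture whenever $H$ contains a cycle, and reduces everything to the case where $H$ is acyclic.

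I would stress that $H$ genuinely can be a DAG: for $I_1=\{1,2\}$, $I_2=\{3\}$ and the permutation $f(1)=3,\,f(2)=2,\,f(3)=1$ one computes $J_1=[2,3]$, $J_2=[1,1]$, so the only edge is $1\to2$; yet the fixed point $f(2)=2$ still furnishes the required pair $\{2\}$ inside $I_1$. Thus the period-$\le k$ pair can live inside a single block and be invisible to $H$. For acyclic $H$ my plan is induction on $k$. A DAG has a sink block $I_b$; since $J_b$ is an interval containing no \emph{whole} block (else that block would be $H$-dominated), and the blocks partition $\{1,\dots,n\}$, $J_b$ meets at most two consecutive blocks. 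I would contract this local configuration---merging $I_b$ into an adjacent block or deleting it while redistributing the covering---to obtain a covering system with $k-1$ blocks, apply the inductive hypothesis, and then lift the resulting short periodic pair back while keeping the period at most the original $k$.

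The decisive difficulty, and the reason the general conjecture is still open, is exactly this acyclic case. The covering hypothesis only forces the \emph{union} of images to be everything; a single block's reach need not cover any whole block, and the two endpoints of one block may be covered by the reaches of two different blocks, so $H$ may have no cycle at all and the periodic pair must be produced from within-block behaviour---precisely the phenomenon (a genuine periodic point interior to an interval) that makes the continuous Conjecture 1.2 hard. Keeping the period bounded by $k$, rather than by the possibly larger block count after contraction or by $n$ if one descends to the point-level functional graph, is the crux. I expect the inductive contraction to require a refined bookkeeping that tracks the left-reach and right-reach of blocks separately---a signed, bipartite refinement of $H$ on the $2k$ block endpoints---together with a proof that a suitable alternating cycle there can still be upgraded to a same-block pair of period $\le k$. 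Verifying that upgrade and the period control is the main gap; that it presently succeeds only for small $k$ (the continuous case $k\le5$, Theorem 1.3) is consistent with the discretization's aim of turning the argument into a finite, systematic case analysis.
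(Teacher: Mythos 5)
You should be aware at the outset that the statement you set out to prove is Conjecture 1.4 of the paper, and the paper itself contains \emph{no proof of it}: it only proves the surrounding reductions (Theorem 1.5, that Conjecture 1.4 implies Conjecture 1.2; Proposition 3.2, that one may assume $f$ is a transitive permutation; Theorem 1.8, the equivalence with Conjecture 1.7 on characteristic sequences), and reports computer verification of the equivalent Conjecture 1.7 only for $n\le 12$. Your proposal, by your own candid admission, is likewise not a proof. The part you do establish is correct: choosing $r,s\in I_{a_0}$ attaining $\lambda(a_0)$ and $\rho(a_0)$ gives $g(\{r,s\})=g(I_{a_0})$, and monotonicity of $g=convf$ propagates the containments around a directed cycle of length $l\le k$ in the block digraph $H$, yielding $g^l(\{r,s\})\supseteq I_{a_0}\supseteq\{r,s\}$. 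But the acyclic case you defer is not a technical remainder --- it is precisely the open content of the conjecture, and your own example ($f(1)=3$, $f(2)=2$, $f(3)=1$, which is exactly the non-transitive permutation in the paper's Remark 3.6(1)) shows $H$ is too coarse to detect the required pair.

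The sketched induction for the acyclic case has concrete failure points you do not resolve. First, after merging the sink block $I_b$ into a neighbour, the inductive hypothesis produces a pair lying in a block of the \emph{contracted} partition; if that block is the merged one, the pair may straddle $I_b$ and its neighbour and hence lie in no single block of the original partition --- the ``upgrade to a same-block pair'' is exactly the missing step, and nothing in the proposal indicates how to perform it while keeping $l\le k$. Second, ``deleting $I_b$ while redistributing the covering'' is undefined: elements of $I_b$ may be the only ones whose images cover certain points, so deletion destroys the hypothesis $\bigcup_i f(i)=\{1,\dots,n\}$, and shrinking ranges to avoid $I_b$ can destroy edges of $H$ as well. (Minor: the paper allows $f(i)=\emptyset$, cf.\ Example 3.3, so your $L,R$ are only partially defined; this is harmless but should be said.) It is instructive that the paper's own reformulation moves in the opposite direction from your coarsening: instead of the $k$-vertex block graph $H$, it passes to the finer $(n-1)$-vertex digraph $\Gamma_f$ on the adjacent pairs $A_i=\{i,i+1\}$, where the characteristic number $m_i$ is the minimal cycle length from $A_i$ (Proposition 3.5), and Conjecture 1.7 asserts the sorted sequence satisfies $m_i'\le i$ \emph{independently of the partition} --- exactly the within-block, fine-scale information your $H$ provably cannot see. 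Your cycle lemma is thus a correct but coarse shadow of the paper's reformulation; a complete argument would have to work at the adjacent-pair scale, and no such argument exists in the paper either.
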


\begin{thm}
Conjecture 1.4 implies Conjecture 1.2.
\end{thm}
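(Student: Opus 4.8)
The plan is to discretize the continuous covering system, apply Conjecture 1.4 to the finite data, and lift the resulting combinatorial loop back to a genuine periodic point through a refinement-and-compactness argument. The lifting will rest on the elementary fact that if $g\colon\mathbb{R}\to\mathbb{R}$ is continuous and there are points $u\ge a$, $v\le b$ in $[a,b]$ with $g(u)=a$ and $g(v)=b$, then $g(x)-x$ changes sign on $[a,b]$ and $g$ has a fixed point there; applied to $g=f^l$ this turns a covering $f^l([a,b])\supseteq[a,b]$ into a periodic point of period dividing $l$, hence $\le l$, inside $[a,b]$. So it suffices to produce a closed interval $[a,b]\subseteq U:=I_1\cup\cdots\cup I_k$ and an integer $l\le k$ with $f^l([a,b])\supseteq[a,b]$.

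First I would normalize. Replacing overlapping intervals by their unions, I may assume the $I_i$ are the connected components $C_1<\cdots<C_m$ of $U$, with $m\le k$; this leaves $U$ and the relation $f(U)\supseteq U$ unchanged, and a period bound $\le m$ implies $\le k$. Then, for a mesh parameter I choose sample points $p_1<\cdots<p_n$ filling $U$, including all endpoints of the $C_j$, and to each index $i$ I attach the elementary interval $J_i=[p_i,p_{i+1}]$ lying in one component (no $J_i$ crosses a gap of $U$). With $\mathrm{samp}(X):=\{\,j:p_j\in X\,\}$ I define the discrete map $f(i):=\mathrm{samp}\bigl(f(J_i)\bigr)$ and take as the partition of $\{1,\dots,n\}$ the $m\le k$ consecutive blocks cut out by the components. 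The covering hypothesis of Conjecture 1.4 is then automatic, since $\bigcup_i f(i)=\mathrm{samp}\bigl(\bigcup_i f(J_i)\bigr)=\mathrm{samp}\bigl(f(U)\bigr)=\{1,\dots,n\}$, the last equality because $f(U)\supseteq U$ contains every $p_j$.

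The core estimate is that $convf$ never over-reaches the continuous image. Writing $\widehat T:=\bigcup_{i\in T}J_i$ for the real interval carried by an index set $T$, one checks $convf(T)\subseteq\mathrm{samp}\bigl(f(\widehat T)\bigr)$: indeed $f(T)=\mathrm{samp}\bigl(f(\widehat T)\bigr)$ already as sets of indices, and passing to the index-hull $convf(T)=[\min f(T),\max f(T)]$ stays inside because $f(\widehat T)$ is a genuine interval. Iterating, and tracking that each application of $convf$ enlarges the carried interval by at most one mesh-width on the right, one gets $(convf)^l(\{r,s\})\subseteq\mathrm{samp}\bigl(f^l(\widehat{\{r,s\}})^{+l\,\delta}\bigr)$, where $\delta$ is the mesh and $(\cdot)^{+l\delta}$ denotes enlargement by $l\delta$. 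Applying Conjecture 1.4 yields a component $C_j$, indices $r,s\in C_j$, and $l\le m\le k$ with $\{r,s\}\subseteq(convf)^l(\{r,s\})$, hence points $u,v\in\widehat{\{r,s\}}$ with $f^l(u)=p_r$ and $f^l(v)=p_s$ up to an $l\delta$-error. Now I would run this for a sequence of partitions with $\delta\to0$; since $j$ and $l$ take finitely many values and all data lie in the compact set $U$, a subsequence gives $p_r\to a$, $p_s\to b$, $u\to u_\infty$, $v\to v_\infty$ in a single component with $f^l(u_\infty)=a\le u_\infty$ and $f^l(v_\infty)=b\ge v_\infty$. The sign-change fact then produces a fixed point of $f^l$ in $[a,b]\subseteq C_j\subseteq U$, a periodic point of period $\le l\le k$.

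The main obstacle is the clash between the two demands placed on the discrete map. The covering hypothesis of Conjecture 1.4 wants the discrete image to be \emph{generous} (every index hit), while the lifting wants it to be \emph{stingy} (never claiming a covering the continuous map does not realize); the definition $f(i)=\mathrm{samp}(f(J_i))$ is precisely the choice that is simultaneously exhaustive and faithful, and verifying that both properties survive iteration of $convf$ is the technical heart. Two further points need care: the partition of Conjecture 1.4 must be into genuinely consecutive blocks, which is why the reduction to the components of $U$ (rather than the possibly overlapping, unordered, or disconnected original $I_i$) is essential; and the one-mesh boundary discrepancy built into any local discretization means a single finite model only yields an \emph{approximate} covering, so the passage to the limit over refinements—legitimate because the block count and the period stay bounded by $k$—is what converts it into the exact covering needed for the periodic point.
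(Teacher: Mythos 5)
Your proposal is correct in substance, and it takes a genuinely different route from the paper's. The paper argues by contradiction and by perturbing $f$: its Lemma 3.1 shows that the property ``no periodic point of period $\le k$ in $I_1\cup\ldots\cup I_k$'' persists under $C^0$-small perturbations; it then iterates the endpoint set under $f$ (discarding points that leave the union) to get finite sets $M_0\subseteq M_1\subseteq\ldots$, picks a stage $N$ at which every new point of $M_N$ lies within $\delta$ of $M_{N-1}$, and perturbs $f$ to $\tilde f$ so that this finite point set becomes exactly invariant. For $\tilde f$ the discretization is \emph{exact} --- images of division points are division points, so both the covering hypothesis of Conjecture 1.4 and the faithful lifting hold on the nose --- and Conjecture 1.4 then yields an exact covering $\tilde f^{\,l}([p_r,p_s])\supseteq[p_r,p_s]$, hence a periodic point of $\tilde f$, contradicting the stability lemma. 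You instead keep $f$ fixed: your point-sampling discretization $f(i)=\mathrm{samp}(f(J_i))$ makes the covering hypothesis automatic (this is precisely what the paper needs the perturbation for), never over-reaches the continuous dynamics, and the resulting \emph{approximate} covering is upgraded to an exact one by refining the mesh and extracting limits --- legitimate because the period $l$ and the number of blocks stay bounded by $k$ and all data lie in a compact set. The paper buys combinatorial exactness at the price of the stability lemma and a somewhat informally described graph perturbation; you buy freedom from any perturbation at the price of error bookkeeping plus a compactness argument. Both proofs end with the same lifting step: an interval covering $f^l([a,b])\supseteq[a,b]$ forces a fixed point of $f^l$ by the intermediate value theorem.

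Two statements in your write-up need repair, though neither derails the argument. First, the bound $(\mathrm{conv}f)^l(\{r,s\})\subseteq\mathrm{samp}\bigl(f^l(\widehat{\{r,s\}})^{+l\delta}\bigr)$ is false as written: $f$ is only continuous, not Lipschitz, so the one-mesh-width slack created at an intermediate step is stretched by the remaining applications of $f$ by an amount not controlled by $\delta$. The correct bound has the form $\varepsilon_l(\delta)$ with $\varepsilon_{j+1}(\delta)=\omega\bigl(\varepsilon_j(\delta)+\delta\bigr)$, where $\omega$ is a modulus of uniform continuity of $f$ on a fixed compact set containing $[p_1,p_n]$ and its first $k$ images; since $l\le k$, one still has $\varepsilon_l(\delta)\to 0$ as $\delta\to 0$, which is all your limit argument uses. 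Second, an intermediate set $T=(\mathrm{conv}f)^j(\{r,s\})$ may contain indices from several components of $U$, in which case $\widehat T=\bigcup_{i\in T}J_i$ is not an interval and your faithfulness inclusion can fail across a gap (the discrete $\mathrm{conv}$ bridges the gap, while $f(\widehat T)$ need not). The carried set must therefore be taken to be the convex hull of the sampled points at every step --- harmless, since $f$ is defined on all of $\mathbb{R}$ and these hulls stay inside $[p_1,p_n]$ --- and the induction run with hulls throughout. With these two adjustments your proof is complete.
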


Moreover, we can prove that it suffices to consider the case $f$ is a transitive permutation in the symmetry group $S_n$ in Conjecture 1.4. And we will see Conjecture 1.4 is equivalent to the following one, which concerns only the property of a permutation. 

\begin{defn}
Let $f$ be a transitive permutation in the symmetry group $S_n$, $i.e.$, $f$ can be written as $(i_1i_2\ldots i_n)$, and $A_i=\{i,i+1\}$ $(i=1,2,\ldots,n-1)$ be $(n-1)$ particular sets. We define the $characteristic\ number$ $m_i$ of each $A_i$ as
$$m_i=\min \{m\arrowvert\ (convf)^m(A_i)\supseteq A_i\}.$$
Here, $convf(A)=$ convex hull of $f(A)=\{\min f(A),\ldots,\max f(A)\}$ for any finite set $A\subseteq \mathbb{N}$.\par
And we define the $characteristic\ sequence$ of $f$ to be 
$$m_1'\le m_2'\le \ldots\le m_{n-1}',$$ 
where $\{m_i'\}$ is a rearrangement of $\{m_i\}$.
\end{defn}

\begin{conj}
For any transitive $f$ in the symmetry group $S_n$, the characteristic sequence $\{m_i'\}$ of $f$ satisfies $m_i'\le i$, \ $i=1,2,\ldots,n-1$.
\end{conj}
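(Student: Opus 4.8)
The plan is to recast the majorization $m_i'\le i$ in its ``conjugate'' form. Since the sets $\{i : m_i\le t\}$ are nested as $t$ grows, the bound $m_i'\le i$ for all $i=1,\ldots,n-1$ is equivalent, by the standard correspondence between a sorted sequence and its counting function (equivalently, by Hall's theorem applied to the bipartite incidence between ``levels'' $t$ and cuts $i$ with $m_i\le t$), to the prefix inequalities
$$N(t):=\#\{\,i : m_i\le t\,\}\ \ge\ t\qquad(t=1,2,\ldots,n-1).$$
In words: for every $t$, at least $t$ of the $n-1$ consecutive pairs $A_i=\{i,i+1\}$ must ``close up'' within $t$ iterations. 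I would prove the conjecture in this form.

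Before attacking $N(t)\ge t$ I would record the elementary dynamics of $convf$ on integer intervals. Writing $J_0=[i,i+1]$ and $J_{m+1}=convf(J_m)=[\,\min_{j\in J_m}f(j),\ \max_{j\in J_m}f(j)\,]=:[a_m,b_m]$, three facts are immediate: $convf$ is monotone for inclusion; since $f$ is injective, $|convf(J)|\ge|J|$, so the lengths $|J_m|$ are non-decreasing and eventually stabilize at some $\ell$; and once $J_{m_i}\supseteq J_0$ one gets $J_{m_i+k}\supseteq J_k$ for all $k$. Transitivity of $f$ enters twice: first to see that $m_i$ is finite (in the stabilized regime $f$ carries each $J_m$ onto the interval $J_{m+1}$, so the orbit is periodic and its union $W$ satisfies $f(W)=W$, forcing $W=[1,n]$), and second in the structural description I would lean on: for $1\le m<m_i$ the iterate $J_m$ lies entirely in $L_i=[1,i]$ or entirely in $R_i=[i+1,n]$, so $m_i$ is exactly the first time $m\ge1$ at which $J_m$ straddles the cut between $i$ and $i+1$ with $a_m\le i$ and $b_m\ge i+1$ simultaneously.

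The base case $t=1$ already shows the shape of the argument. A cut $i$ has $m_i=1$ precisely when $\{f(i),f(i+1)\}$ straddles the cut, i.e.\ $f(i)\ge i+1$ and $f(i+1)\le i$ (or the mirror image). Since $f$ is transitive it has no fixed point, so $f(1)\ge 2$ and $f(n)\le n-1$; thus in one-line notation the quantity $f(j)-j$ is positive at $j=1$ and negative at $j=n$, and any place where it passes from ``$\ge1$'' to ``$\le -1$'' (it can never be $0$) yields a cut with $f(i)\ge i+1>i\ge f(i+1)$, hence $m_i=1$ and $N(1)\ge1$. The main line of attack for general $t$ would be an induction on $n$ via a contraction that preserves transitivity --- for instance bypassing the largest letter $n$ in the cycle $\cdots\to a\to n\to b\to\cdots$ to produce an $(n-1)$-cycle $f'$ with $a\mapsto b$ --- together with a crossing/monovariant count (a $t$-step analogue of the diagonal-crossing argument above) showing that reinserting $n$ raises the counting function $N(\cdot)$ by a controlled amount. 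The rotation $f(j)=j+1$, for which $m_i=n-i$ and the bound $m_i'\le i$ is tight for every $i$, is the extremal case I would use to calibrate the induction.

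The hard part will be the passage from the one-step crossing count to the full inequality $N(t)\ge t$, and it concentrates in two places. First is the \emph{simultaneity}: $m_i\le t$ requires the \emph{same} iterate $J_m$ to overshoot the cut on both sides at once, whereas the left endpoint $a_m$ and the right endpoint $b_m$ drift independently (indeed either can move away from the cut before returning), so first-passage times for the two endpoints do not simply combine. Second is the \emph{correlation across cuts}: the $m_i$ cannot be bounded one cut at a time --- the per-cut bound $m_i\le\max(i,n-i)$ holds for the rotation yet its sorted version already violates $m_i'\le i$ --- so the genuine content is that the events ``cut $i$ closes early'' are forced to be numerous \emph{collectively}. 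I expect the decisive step to be a single monovariant, defined from the joint interval dynamics, that simultaneously certifies the closure of at least $t$ cuts by time $t$; producing such a quantity, and showing the contraction does not destroy it, is where the real work lies.
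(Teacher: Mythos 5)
The statement you are attacking is Conjecture 1.7, which the paper itself leaves \emph{open}: its only results about it are the equivalence with Conjecture 1.4 (Theorem 1.8, proved via the Markov graph of the cycle and a piecewise linear extension) and computer verification for $n\le 12$. Your proposal does not close it either. What you actually establish is (a) the reformulation --- correct and potentially useful --- that $m_i'\le i$ for all $i$ is equivalent to $N(t)=\#\{i:\ m_i\le t\}\ge t$ for all $t$, and (b) the case $t=1$, which is a complete argument (it is the standard ``loop'' lemma for Markov graphs of cycles: a transitive $f$ has no fixed point, so $f(j)-j$ is positive at $j=1$, negative at $j=n$, and at a sign change $convf(A_i)\supseteq A_i$). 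From $t\ge 2$ onward you have a research program, not a proof: the proposed induction (bypass the largest letter $n$ to get an $(n-1)$-cycle $f'$) comes with no lemma relating the hull dynamics of $f'$ to that of $f$, and the ``monovariant'' that is supposed to certify $t$ closures by time $t$ is never exhibited. This is not a detail one can wave at: deleting the letter $n$ can shrink convex hulls, so closures present for $f$ can disappear for $f'$ and new ones can appear, and the counting function of $f'$ does not control that of $f$ in either direction. You flag this yourself as ``where the real work lies,'' but that missing piece \emph{is} the conjecture; what is proved in the proposal amounts to re-deriving that every cycle's Markov graph contains a loop, i.e., $N(1)\ge 1$.

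There is also a genuine (though repairable) gap in the one preliminary lemma you sketch in full: finiteness of $m_i$. From ``the stabilized intervals are permuted by $f$ and their union $W$ is $f$-invariant, hence $W=[1,n]$'' you cannot conclude that some stabilized interval contains $\{i,i+1\}$: equal-length intervals can cover $[1,n]$ with each one lying entirely in $[1,i]$ or entirely in $[i+1,n]$ --- this is exactly the ``simultaneity'' failure you warn about later, occurring already in your own preliminaries. The claim is true, but it needs the backward orbit: since $J_{m+1}=convf(J_m)\supseteq f(J_m)$ and $f$ is a bijection, one gets $J_0\subseteq f^{-M}(J_M)$; in the stabilized regime $f$ carries each stabilized interval onto the next, so $f^{-1}$ carries each onto the previous one, hence $f^{-M}(J_M)$ is itself one of the stabilized intervals, and it contains $J_0=A_i$. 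Thus the periodic part of the hull orbit does revisit a superset of $A_i$, giving $m_i<\infty$. With that repair your setup is sound, and the conjugate form $N(t)\ge t$ is a sensible foothold (it is in the same discrete spirit as the paper's own reduction in Theorem 1.8), but as it stands the proposal establishes the conjectured inequality only at $t=1$.
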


One of the advantages of Conjecture 1.7 is the independence of the partition. Furthermore, using the directed graph associated to a periodic orbit (see \cite{onedimdynamics1992} page 7 for definitions), we can finally prove:

\begin{thm}
Conjecture 1.7 is equivalent to Conjecture 1.4.
\end{thm}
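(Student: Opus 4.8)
The plan is to prove the equivalence of Conjecture 1.7 and Conjecture 1.4 in two directions, establishing the harder implication (Conjecture 1.7 $\Rightarrow$ Conjecture 1.4) by reducing Conjecture 1.4 to the case of a transitive permutation, and then the reverse implication by specialization. Before either direction, I would first record the reductions already hinted at in the text: that in Conjecture 1.4 it suffices to take $f$ to be a transitive permutation in $S_n$. The idea is that a general set-valued $f$ with $\bigcup_i f(i) = \{1,\ldots,n\}$ can be refined---by choosing for each $i$ a single image value and tracking multiplicities---into a permutation, and the orbit structure of a non-transitive permutation decomposes into transitive cycles supported on sub-intervals, so one may restrict attention to a single transitive cycle. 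I would make this precise using the fact that $convf$ only depends on $\min f$ and $\max f$ on each argument set, so shrinking $f(i)$ to a two-point or one-point set preserves the relevant covering relations for the sets $\{r,s\}$.

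\smallskip

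Next I would carry out the key translation between the two combinatorial statements. Given a partition $I_1,\ldots,I_k$ as in Conjecture 1.4, the conclusion we seek is a pair $r,s\in I_j$ with $(convf)^l(\{r,s\})\supseteq\{r,s\}$ for some $l\le k$. I would argue that it is enough to consider \emph{adjacent} pairs $\{r,s\}=\{r,r+1\}$, i.e.\ the sets $A_r$ of Definition 1.6, since if a block $I_j$ of length $t$ contains no returning adjacent pair of small period then by a telescoping/monotonicity argument no non-adjacent pair inside $I_j$ returns quickly either; here the convexity in the definition of $convf$ is crucial, because $(convf)(\{r,s\})\supseteq(convf)(A_i)$ whenever $A_i\subseteq[r,s]$, so the characteristic number of the whole block is controlled by the minimum of the $m_i$ over adjacent pairs inside it. Concretely, within $I_j=\{i_{j-1}+1,\ldots,i_j\}$ of size $t_j$, the relevant quantity is $\min_{i_{j-1}+1\le i\le i_j-1} m_i$, and Conjecture 1.4 asks that for some $j$ this minimum is $\le k$.

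\smallskip

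With that reformulation in hand, the equivalence becomes a purely order-theoretic statement about the multiset $\{m_i\}$ and its behavior under partitioning. The plan is: Conjecture 1.7 says the sorted sequence satisfies $m_i'\le i$. Summing or using a pigeonhole/Hall-type counting, I would show this is equivalent to the assertion that for \emph{every} way of cutting $\{1,\ldots,n-1\}$ into $k$ consecutive blocks (the gaps between the $I_j$), at least one block contains an index $i$ with $m_i\le k$. The reverse direction (Conjecture 1.4, i.e.\ the partition statement, $\Rightarrow$ Conjecture 1.7) follows by contraposition: if some $m_{j}'>j$, i.e.\ at least $n-1-j+1$ values exceed $j$ so fewer than $j$ values are $\le j$, then one can distribute the $n-1$ adjacent-pair indices into $j$ blocks so that each block's minimum characteristic number exceeds $j\ge$ (number of blocks), violating the partition conclusion with $k=j$. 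Conversely, $m_i'\le i$ for all $i$ guarantees that any partition into $k$ blocks must place one of the at-most-$k$ small-characteristic indices inside some block. The directed-graph description of the periodic orbit referenced from \cite{onedimdynamics1992} enters here to certify that the characteristic numbers $m_i$ are exactly the graph-theoretic return times, making the counting rigorous.

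\smallskip

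The main obstacle I anticipate is the block-to-adjacent-pair reduction in the second paragraph: showing that the characteristic number of an arbitrary pair $\{r,s\}$ spanning several adjacent pairs is genuinely governed by the minimum characteristic number among the adjacent pairs it contains. This requires a careful monotonicity argument for $convf$ under inclusion of the argument interval, and one must verify that returning of the smaller set $A_i$ forces returning of the larger set $\{r,s\}$ in no more steps---this is where the convex-hull definition does the real work, and where an off-by-one in the period bound $l\le k$ could creep in. Once this monotonicity lemma is established cleanly, both implications reduce to the elementary sorting/pigeonhole equivalence described above, which I expect to be routine.
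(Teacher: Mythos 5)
Your skeleton matches the paper's own argument: for Conjecture 1.7 $\Rightarrow$ Conjecture 1.4 you use the pigeonhole fact that the $k-1$ cut points of a partition cannot absorb all $k$ indices with $m_i\le k$ guaranteed by $m_i'\le i$, and for the converse you take the (at most $k-1$) indices with small characteristic number as cut points and aim for a contradiction. Those counting steps are sound and are exactly Proposition 3.4 and the proof of Theorem 1.8 in the paper. Note also that the forward direction only needs the trivial half of your ``block-to-adjacent-pair'' reformulation (an adjacent pair $A_t$ in a block with $m_t\le k$ gives $r=t$, $s=t+1$ in Conjecture 1.4), so that direction closes.

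The genuine gap is the step you flag as the main obstacle, and the fix you propose for it does not work. You claim $(convf)(\{r,s\})\supseteq(convf)(A_i)$ whenever $A_i\subseteq[r,s]$; this is false, since $(convf)(\{r,s\})$ is the convex hull of the \emph{two} points $f(r),f(s)$ only, and an interior point can be mapped far outside it (e.g.\ $f(r)=1$, $f(s)=2$, $f(i)=10$ for some $r<i<s$). The correct monotonicity, $A\subseteq B\Rightarrow convf(A)\subseteq convf(B)$, points the wrong way for your purposes: it shows the images of larger sets are larger, which only makes it \emph{easier} for the pair $\{r,s\}$ to return; it cannot convert a returning pair $\{r,s\}$ into a returning adjacent pair $A_t$ with $m_t\le l$, which is precisely what your contraposition argument for Conjecture 1.4 $\Rightarrow$ Conjecture 1.7 requires. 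No telescoping of inclusions will produce this, because the statement is genuinely dynamical rather than order-theoretic. The paper closes this step differently: extend $f$ to a piecewise linear map of $[1,n]$, so that $(convf)^l(\{r,s\})\supseteq\{r,s\}$ yields $f^l([r,s])\supseteq[r,s]$; extract a fixed point $x_0$ of $f^l$ in $[r,s]$ by the intermediate value theorem; use transitivity of the permutation to conclude that $x_0$ and its iterates are not integers, hence lie in open unit intervals $(t,t+1)$; and observe that piecewise linearity turns the itinerary of $x_0$ into a cycle of length $l$ through the vertex $A_t$ in the directed graph $\Gamma_f$, so that $m_t\le l\le k$ by the identification of $m_i$ with the minimal cycle length through $A_i$ (Proposition 3.5). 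Without this fixed-point/itinerary argument, or a combinatorial substitute for it, your converse direction does not go through.
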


We organize the paper in the following way.

In section 2 we explain our main idea of proof for $k=5$ through specific examples. In fact, we claim that for $k$ small (at least for $k\le 5$), the existence of a periodic point only depends on the ordering of the end points of the intervals and their images under $f^i(i\le k)$. Therefore, Theorem 1.3 can be proved case by case. 

In section 3 we give new ideas for attempts to prove Conjecture 1.2. We explain in detail how we can reduce the initial problem to a discrete one. And the main results, Theorem 1.5 and Theorem 1.8, are proved in this section.

\section{The case $k=5$}

The purpose of this section is to prove Theorem 1.3. Before coming to the proof for $k=5$, we first take a look at some basic examples which help us understand the problem. We will always assume the mappings under consideration are continuous from now on.

\begin{exmp}[the case $k=1$]
Let $f:I \to \mathbb{R}$ satisfy $f(I) \supseteq I$. Then there exists a fixed point of $f$ in $I$.
\end{exmp}

\begin{rem}
This fact is an easy consequence of the intermediate value theorem in calculus. And we will see later that for $k$ larger, most cases can be reduced to this fundamental one.   
\end{rem}

\begin{exmp}
(See Figure 1) In this example, each interval $I_i$ is mapped to some $f(I_{i_j}) \supseteq I_j$. Then $f$ induces a permutation on ${I_1,\ldots,I_k}$. Thus, $\exists$ $l \le k$, $s.t.$ $f^l(I_1) \supseteq I_1$. And by example 2.1, we can find $x_0 \in I_1$ with $f^l(x_0)=x_0$.This example explains why we think the period is not bigger than $k$ in conjecture 2.1.
\begin{figure}[hb]
    \centering
    \includegraphics[width=\textwidth]{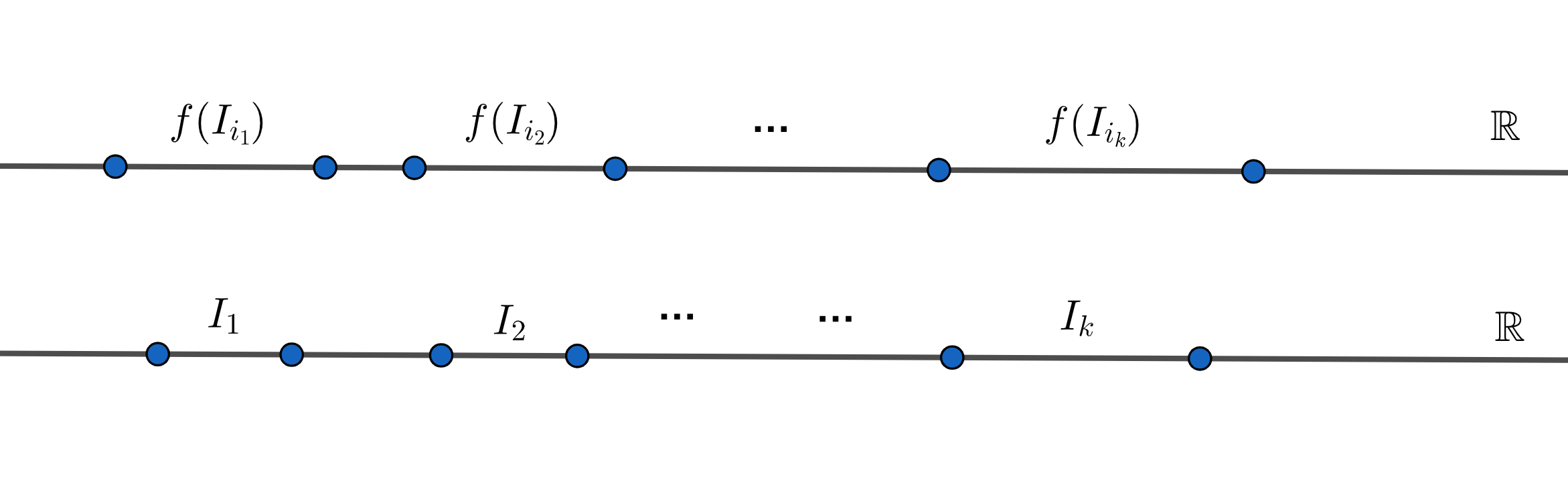}
    \caption{Example 2.3}
    \label{fig:my_label}
\end{figure}
\end{exmp}

We state a well-known lemma which is useful when we want to simplify the initial problem.

\begin{lem}
If $f([a,b]) \supseteq [c,d]$, then there is a subinterval $[a',b'] \subseteq [a,b]$ such that $f([a',b'])=[c,d]$. And furthermore, the end points of $[a',b']$ is mapped to the end points of $[c,d]$.
\end{lem}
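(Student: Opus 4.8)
The plan is to run the construction entirely from the intermediate value theorem, first producing some preimage of each endpoint and then \emph{shrinking} the interval by an extremal (first/last hitting time) choice so that its image lands on $[c,d]$ and nothing more. First I would dispose of the degenerate case: if $c=d$, then $[c,d]=\{c\}$, and since $c\in f([a,b])$ there is an $x_0$ with $f(x_0)=c$; taking $[a',b']=\{x_0\}$ finishes it. So I may assume $c<d$. Because $f([a,b])\supseteq[c,d]$, there exist $p,q\in[a,b]$ with $f(p)=c$ and $f(q)=d$. The two cases $p<q$ and $p>q$ are symmetric, the only difference being which endpoint of $[a',b']$ is sent to $c$ and which to $d$; I would carry out the case $p<q$, which yields $a'\mapsto c$ and $b'\mapsto d$.

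For the construction, I would use that the level sets in question are nonempty and closed. The set $\{x\in[p,q]:f(x)=d\}$ contains $q$ and is closed by continuity, so it has a minimum; having fixed that, the set $\{x\in[p,b']:f(x)=c\}$ contains $p$ and is likewise closed, so it has a maximum. I therefore set
$$b'=\min\{x\in[p,q]:f(x)=d\},\qquad a'=\max\{x\in[p,b']:f(x)=c\}.$$
Then $f(a')=c$, $f(b')=d$, and $a'<b'$ since $f(b')=d\neq c$; moreover by construction $f(x)\neq d$ for every $x\in[p,b')$ and $f(x)\neq c$ for every $x\in(a',b']$. These two exclusion properties are the whole point of the extremal choices.

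The verification then splits into two inclusions. The inclusion $f([a',b'])\supseteq[c,d]$ is immediate from the intermediate value theorem applied to $f$ on $[a',b']$, since $f(a')=c$ and $f(b')=d$. For the reverse inclusion I would argue by contradiction: if some $x\in[a',b']$ had $f(x)>d$, then (as $x\neq a',b'$) the intermediate value theorem on $[a',x]$, using $f(a')=c<d<f(x)$, would give a point $\xi\in(a',x)\subseteq[p,b')$ with $f(\xi)=d$, contradicting the minimality defining $b'$; symmetrically, if $f(x)<c$ for some $x\in[a',b']$, then $f(b')=d>c>f(x)$ forces a point $\eta\in(x,b')\subseteq(a',b']$ with $f(\eta)=c$, contradicting the maximality defining $a'$. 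Hence $f([a',b'])=[c,d]$, with the endpoints of $[a',b']$ mapped to the endpoints of $[c,d]$.

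The step I expect to be the only real obstacle is this reverse inclusion $f([a',b'])\subseteq[c,d]$: an arbitrary subinterval whose endpoints map to $c$ and $d$ can easily overshoot $[c,d]$, and it is precisely the first-hitting choice of $b'$ and the last-hitting choice of $a'$ that forbid the overshoot above $d$ and the undershoot below $c$, respectively. Everything else (existence of preimages, closedness of level sets, the symmetric case $p>q$) is routine.
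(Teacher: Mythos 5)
Your proof is correct and follows essentially the same approach as the paper: pick preimages of $c$ and $d$, order them without loss of generality, and then shrink via extremal (first/last hitting) choices of points in $f^{-1}(c)$ and $f^{-1}(d)$, exactly as the paper does with its $\sup$/$\inf$ construction. The only difference is inessential (you fix the preimage of $d$ first and then the preimage of $c$, while the paper does the reverse), and your write-up supplies the IVT verification that the paper leaves implicit.
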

\begin{proof}
There are points $x_c$ and $x_d$ in $[a,b]$ such that $f(x_c)=c$ and $f(x_d)=d$. Without loss of generality, we can assume $x_c<x_d$. Then the proof is completed by choosing $a'=\sup f^{-1}(c)\cap [x_c,x_d]$ and $b'=\inf f^{-1}(d)\cap [a',x_d]$.
\end{proof}

\begin{rem}
We see from this lemma that if necessary, we can replace $I_i$ in a covering system $(I_1,\ldots,I_k;f)$ by some subintervals $I_i'$ such that $(I_1',\ldots,I_k';f)$ is still a covering system, but the end points of each $I_i'$ are mapped to the end points of $f(I_i')$.

Moreover, we can assume the initial covering system $(I_1,\ldots,I_k;f)$ is $minimal$, that is, there doesn't exist any covering system $(J_1,\ldots,J_l;f)$ with the same mapping $f$ such that $l\le k$ and each $J_j$ is contained in some $I_i$. (See lemma 1 in \cite{1988Periodic} for more details.) In particular, we can assume that the interior sets of $f(I_i)$ and $f(I_j)$ are disjoint. Of course, we can also assume $I_i$ and $I_j$ are disjoint in Conjecture 1.2, since if not, we can consider a new interval $I_i\cup I_j$ instead.
\end{rem}

Next, we introduce our methods to deal with different cases for $k=5$ through the following three examples. All the covering systems below are assumed to be minimal. Hence, each end point of $I_i$ is mapped into $I_1\cup \ldots \cup I_k$. (See \cite{1988Periodic} lemma 4.)

\begin{figure}[ht]
    \centering
    \includegraphics[width=\textwidth]{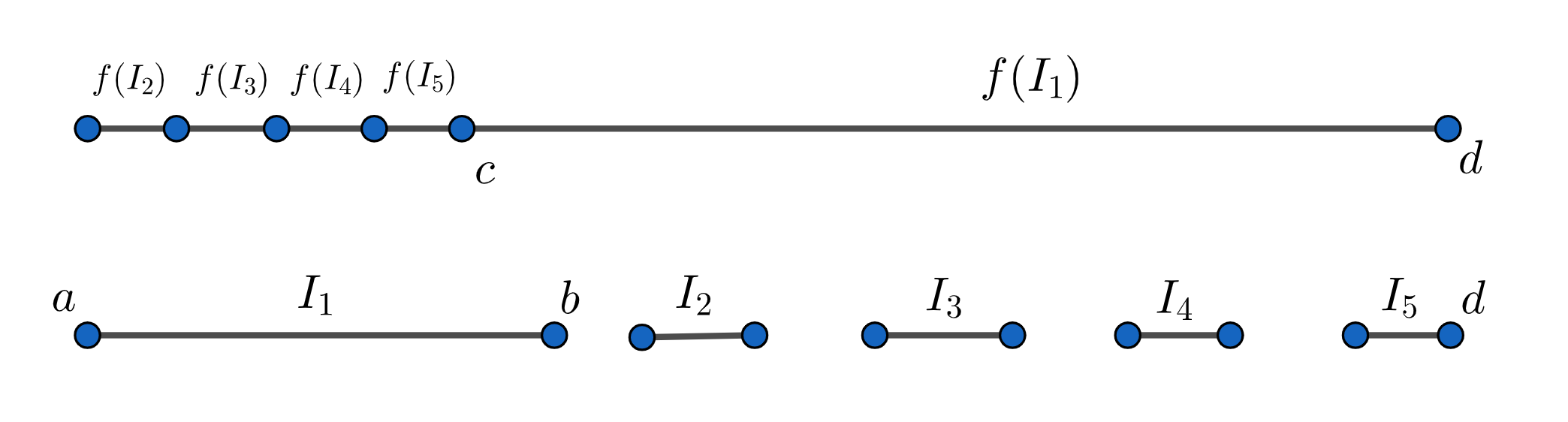}
    \caption{Example 2.6}
    \label{fig:my_label}
\end{figure}

\begin{exmp}
The behavior of $f$ is illustrated in Figure 2.\par
 As mentioned above, we assume the end points of $I_1$ are mapped to the end points of $f(I_1)$. We can assume further that $f(a)=c,\ f(b)=d$. Otherwise, there will exist a fixed point of $f$ in $I_1$. For the same reason, $f(x)>x, \forall x\in I_1$. So we obtain
$$f^2(a)=f(c)>c>a,\quad f^2(b)=f(d)<b,$$
which shows that $f^2(x_0)=x_0$ for some $x_0\in I_1$.\par
We find that $I_1\cap f(I_1)\neq \emptyset$, which is much special in this example.
\end{exmp}

\begin{figure}[ht]
    \centering
    \includegraphics[width=\textwidth]{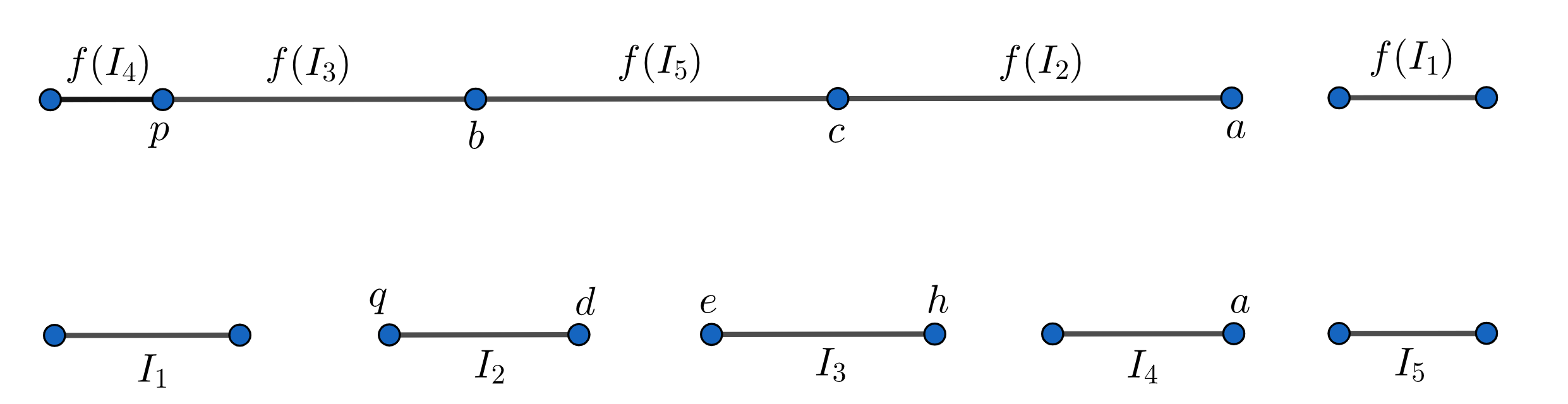}
    \caption{Example 2.7}
    \label{fig:my_label}
\end{figure}
\begin{exmp}
See Figure 3 for the information of the covering system. And recall that we always assume end points of $I_i$ are mapped to end points of $f(I_i)$.\par
Case (1): For $I_2$, assume $f(d)=a,\ f(q)=c$. In this case, we consider the interval $I_5$. We see that $d,e\in f(I_5)$, and $f(d), f(e)\in f^2(I_5)$. Since $f$ is continuous, $f^2(I_5)\supseteq [f(e),f(d)]=[f(e),a]\supseteq [b,a]\supseteq I_3\cup I_4$. Therefore, $f^3(I_5)\supseteq f(I_3\cup I_4)\supseteq I_1$. And finally $f^4(I_5)\supseteq f(I_1)\supseteq I_5$. By example 2.1, we know there is an $x_0\in I_5$ with $f^4(x_0)=x_0$.\par
Case (2): For $I_3$, assume $f(e)=p,\ f(h)=b$. We argue in a similar way to deduce $f^5(I_5)\supseteq I_5$. And again we come back to example 2.1.\par
Case (3): None of the conditions in the above two cases are satisfied. Then $f(d)=c,\ f(e)=b,\ f(q)=a,\ f(h)=p$. This time we consider the point $f(c)$. And after similar discussions, we will obtain: $f^4(I_2)\supseteq I_2,$ if $f(c)\geq q$; $f^5(I_5)\supseteq I_5$, if $f(c)<q.$ Anyway, there is a periodic point of period $\le 5$ in $I_1\cup...\cup I_5$.

In example 2.7, we no longer have the property mentioned in example 2.6. However, we can study the image of some intervals and the behavior of their end points under the mapping $f$ or even $f^2$ to reduce the problem to example 2.1. Note that we frequently use the fact: if $f(I)$ contains $x,y\ (x<y)$ where $I$ is an interval, then $f(I)\supseteq [x,y]$.
\end{exmp}

\begin{exmp}
See Figure 4.\par
Of course we can check in the same way as in example 2.6 and 2.7. However, we can simplify the problem by using the induction trick. Indeed, in this example, we can consider another covering system $(I_1,I_2;f^2)$ instead of the given one to reduce it to the case $k=2$.
\begin{figure}[ht]
    \centering
    \includegraphics[width=\textwidth]{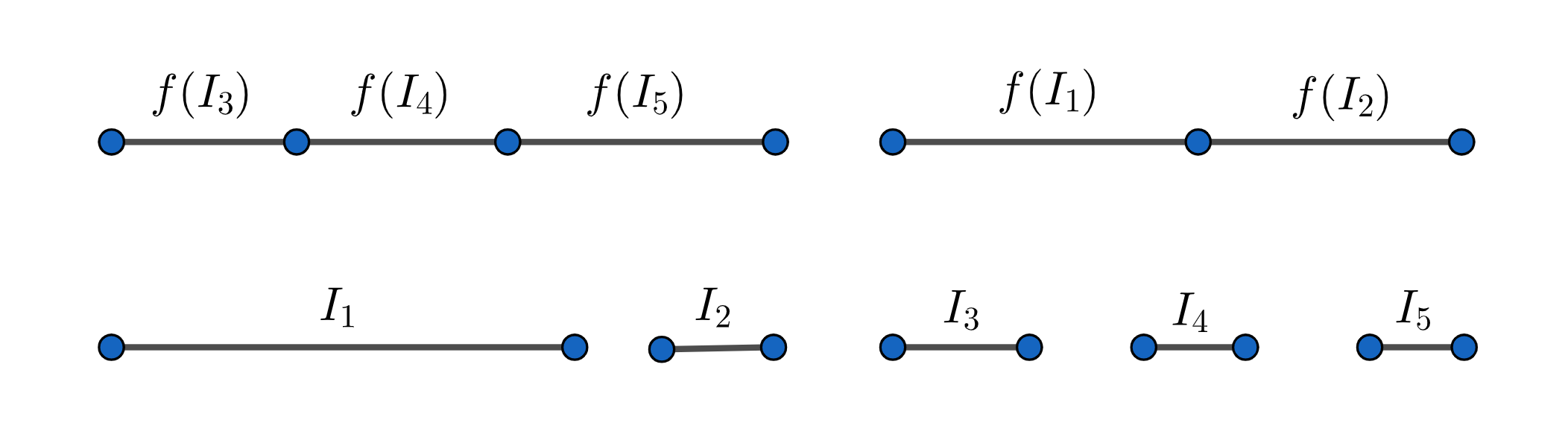}
    \caption{Example 2.8}
    \label{fig:my_label}
\end{figure}
\end{exmp}

\begin{rem}
According to the ordering of all the end points of $I_i$ and $f(I_i)$, we can reduce Conjecture 1.2 to finitely many cases when $k=5$. However, since there are about $(5!)^2$ cases in total, it is impossible to list them all in this paper. We claim here without proof that all the cases can be checked using the methods in the above three examples. Thus Theorem 1.3 holds.  
\end{rem}

\section{Discrete problems}

In this section we show that conjecture 1.2 may be proved by solving another discrete problem related to it. The main idea comes from our attempts to find a general method when trying to prove Conjecture 1.2 for $k=5$. And we now explain it.

As we have seen in the examples in section 2, it will be helpful for simplification if the image of an interval contains entirely another one or more intervals, like in example 2.3. However, not all mappings have this nice property. Therefore, we have to discuss where the end points of each interval go under the mapping $f$ or even $f^2$, just like what we have done in example 2.7. So this inspires us to divide the initial intervals into smaller subintervals, such that the image of each subinterval contains the whole of some other subintervals, not just part of them.

For example, if we cut the intervals in example 2.7 into small pieces, then more details of the mapping $f$ will be exhibited in the graph, besides the behavior of the end points of each interval under $f$. (See Figure 5.) However, it may happen that no matter how you divide the intervals, there will be an interval whose image under $f$ contains only part of another interval. What saves us here is that, we can consider perturbation of $f$ if the intervals are cut into enough small pieces. Indeed, we have the following lemma:

\begin{lem}
If there is a covering system $(I_1,I_2,\ldots,I_k;f)$ such that $$\forall x\in I_1\cup \ldots\cup I_k,\quad and\quad \forall l\le k,\quad f^l(x)\neq x,$$ then $\exists$ $\delta >0$, $s.t.$, whenever $\lVert g-f\rVert<\delta$, it holds that $g^l(x)\neq x$, for $l\le k$, $x\in I_1\cup \ldots \cup I_k$.
\end{lem}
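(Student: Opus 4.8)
The plan is to argue by compactness together with the uniform stability of finitely many iterates. Write $K=I_1\cup\cdots\cup I_k$, which is compact as a finite union of closed intervals, and interpret $\lVert g-f\rVert$ as the supremum norm over a sufficiently large compact set (fixed below) so that the composite maps are controlled. First I would encode the hypothesis as a positive gap: for each $l\le k$ the function $x\mapsto f^l(x)-x$ is continuous and, by assumption, never vanishes on $K$, so by compactness it attains a positive minimum absolute value $\epsilon_l>0$. Setting $\epsilon=\min_{1\le l\le k}\epsilon_l>0$, we have $\lvert f^l(x)-x\rvert\ge\epsilon$ for all $x\in K$ and all $l\le k$. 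The whole problem then reduces to showing that, for $g$ close to $f$, the iterate $g^l$ stays uniformly within $\epsilon$ of $f^l$ on $K$.

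Second, I would establish this uniform stability of iterates: for each fixed $l\le k$, $\sup_{x\in K}\lvert g^l(x)-f^l(x)\rvert\to0$ as $\lVert g-f\rVert\to0$. Let $\tilde K$ be a fixed compact neighborhood (say the closed $1$-neighborhood) of $\bigcup_{j=0}^{k}f^j(K)$; since $f$ is continuous and $K$ compact, $\tilde K$ is compact and $f$ is uniformly continuous on it, with some modulus of continuity $\omega$. Proceeding by induction on $l$, I would split
$$g^l(x)-f^l(x)=\big[g(g^{l-1}(x))-f(g^{l-1}(x))\big]+\big[f(g^{l-1}(x))-f(f^{l-1}(x))\big].$$
The first bracket is bounded by $\lVert g-f\rVert$, and the second by $\omega\big(\lvert g^{l-1}(x)-f^{l-1}(x)\rvert\big)$ as long as the relevant points stay in $\tilde K$. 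Writing $d_l=\sup_{x\in K}\lvert g^l(x)-f^l(x)\rvert$ yields the recursion $d_l\le\lVert g-f\rVert+\omega(d_{l-1})$ with $d_0=0$; since $\omega(t)\to0$ as $t\to0$ and there are only finitely many steps, $d_l\to0$ for every $l\le k$. Choosing $\lVert g-f\rVert$ small enough first guarantees $d_{l-1}\le1$ at each step, so that $g^{l-1}(x)$ indeed remains in $\tilde K$ and the modulus estimate is legitimate.

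Finally, I would pick $\delta>0$ so small that $\lVert g-f\rVert<\delta$ forces $d_l<\epsilon$ for every $l\le k$ at once; this is possible by the previous step because there are only finitely many values of $l$. Then for any $x\in K$ and $l\le k$,
$$\lvert g^l(x)-x\rvert\ge\lvert f^l(x)-x\rvert-\lvert g^l(x)-f^l(x)\rvert\ge\epsilon-d_l>0,$$
so $g^l(x)\neq x$, which is exactly the conclusion.

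The main obstacle is the second step: controlling $\lVert g^l-f^l\rVert$ rather than merely $\lVert g-f\rVert$, since naive iteration can amplify a perturbation. The decisive point is that $k$ is \emph{fixed and finite}, so the recursion $d_l\le\lVert g-f\rVert+\omega(d_{l-1})$ is run only $k$ times and the accumulated error stays small; for unbounded $l$ this argument would break down. Keeping careful track of the compact set $\tilde K$ on which $f$ is uniformly continuous, and ensuring the perturbed iterates do not escape it, is the one place where genuine care is required.
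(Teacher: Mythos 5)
Your proof is correct and takes essentially the same route as the paper's: compactness of $K=I_1\cup\cdots\cup I_k$ converts the no-periodic-point hypothesis into a uniform positive gap, and uniform continuity of $f$ on a compact set containing the iterates $f^j(K)$ gives the stability of $g^l$ against $f^l$ for the finitely many $l\le k$. The only cosmetic difference is that the paper encodes the gap as nonvanishing of the single product $F(x)=(f(x)-x)\cdots(f^k(x)-x)$ and leaves the iterate-stability induction implicit, whereas you bound each factor separately and carry out that induction in full detail.
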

\begin{proof}
Recall that $f$ has a periodic point of period $\le k$ in $I_1\cup \ldots\cup I_k$ is equivalent to that $F(x)=(f(x)-x)\ldots(f^k(x)-x)$ has a zero in $I_1\cup \ldots\cup I_k$. Therefore, the proof is completed since $f$ is uniformly continuous on a compact set containing $f^k(I_1\cup \ldots\cup I_k)$.
\end{proof}

Now we can introduce the discrete problem related to Conjecture 1.2. Consider the covering system in Lemma 3.1. Let $a_1<b_1<a_2<b_2<\ldots<a_k<b_k$ be all the end points of the intervals $I_1,\ldots,I_k$ and denote $M_0=\{a_1,b_1,\ldots,a_k,b_k\}$. Consider their images $f(M_0)=\{ f(a_1),\ldots,f(b_k)\}$ and we eliminate the points which don't belong to $I_1\cup \ldots\cup I_k$ to obtain a set $S_1\subseteq f(M_0)$. Put $M_1=M_0\cup S_1$. Inductively, in each step we consider the image of $M_i$ under the mapping $f$ and we eliminate those points dropping out of $I_1\cup ...\cup I_k$ to get a set $S_{i+1}\subseteq f(M_i)$. Then let $M_{i+1}=M_i\cup S_{i+1}$. 

Finally we obtain a sequence of sets $M_0\subseteq M_1\subseteq \ldots \subseteq M_i\subseteq \ldots$, where each $M_i$ contains the images of the initial end point set $M_0$ under \{$id,f,f^2,\ldots,f^i$\} except for those dropping out of $I_1\cup \ldots\cup I_k$.

Then we can find a sufficiently large integer $N$ such that 
$\forall x\in M_N-M_{N-1}$, $dist(x, M_{N-1})<\delta$, where $\delta$ comes from Lemma 3.1. Therefore, we can take small perturbation of $f$ to get a map $\tilde{f}$ with $\lVert \tilde{f}-f\rVert<\delta$, and $\widetilde{M_{N-1}}=\widetilde{M_N}$ for $\tilde{f}$. Indeed, we just move each point $(x_0,f(x_0))\in M_{N-1}\times M_{N}$ on the graph of $f$ to the nearest point $(x_0,y_0)\in \{x_0\}\times M_{N-1}$ without destroying the continuity. Thus, if we divide the intervals into small pieces using the points in $\widetilde{M_N}$, then $\tilde{f}$ will have the nice property that the image of each subinterval contains entirely another one or more subintervals. In other words, $\tilde{f}$ can be regarded as a discrete mapping from $\{1,2,\ldots,n\}$ to subsets of $\{1,2,\ldots,n\}$. (See Conjecture 1.4.) Here each number $j$ represents a small piece of interval $J_j$. And $j'\in f(j)$ if $f(J_j)\supseteq J_{j'}$. Note that the image of some interval may be empty, since we ignore the part outside $I_1\cup \ldots\cup I_k$. And we only care about the interval determined by $\{f(a),f(b)\}$ if $a,b\in \widetilde{M_{N}}$, although the image $f([a,b])$ may be larger.

It should also be remarked here that the only useful information of $f$ on the so called ``gaps" (see \cite{1988Periodic}) between two adjacent intervals $I_i$ and $I_{i+1}$ is the behavior of the end points of each gap. In other words, for the image $f(J_j)$ of each small subinterval $J_j$, we only focus on the part $f(J_j)\cap (I_1\cup\ldots\cup I_k)$. This is the reason why we eliminate the points outside $I_1\cup\ldots\cup I_k$ in each step. We believe this information is enough for the proof because in thousands of examples for $k\le 5$ it does suffice to get a conclusion.

To sum up, once we have a mapping $f$ satisfying the conditions in Lemma 3.1, we can find an $\tilde{f}$ close enough to $f$, which also satisfies the conditions but can be viewed as a discrete mapping. Such $f$ and $\tilde{f}$ are counterexamples to Conjecture 1.2. Therefore, if we argue by contradiction, we can prove Theorem 1.5, which says that Conjecture 1.2. is a corollary of Conjecture 1.4. 

\begin{proof}[Proof of Theorem 1.5]
Suppose the covering system $(I_1,I_2,\ldots,I_k;f)$ is a counterexample to Conjecture 1.2. Then as mentioned above, after perturbation, we can assume $f$ satisfies the conditions of Conjecture 1.4. Therefore the conclusion of Conjecture 1.4 gives us a subinterval of $I_j$ represented by $conv\{r,s\}$ whose image under $f^l$ contains itself. Thus there must exist a periodic point of period $\le l\le k$ in $I_j$, a contradiction.  
\end{proof}

Although Conjecture 1.4 reduces Conjecture 1.2 to a discrete problem, it is not satisfactory since it involves something inconvenient to deal with, like convex hull and partition. In the following we will simplify the conditions in Conjecture 1.4 and find another description of it which only depends on the own property of a permutation.

Let $f$ be in Conjecture 1.4. We can assume further:\par
(1)$\quad \forall i\neq j$, $f(i)\cap f(j)= \emptyset$.\par
(2)$\quad \forall i$, $f(i) \neq \emptyset$.\par
(3)$\quad \forall i$, $f(i)$ contains exactly one element.\par
(4)$\quad f$ is a transitive permutation in the symmetry group $S_n$, $i.e.$, $f$ can be written as $(i_1i_2\ldots i_n)$.\par
Indeed, (1) can be realized since we can move out the common part of $f(i)$ and $f(j)$ from one of them without changing the property $\bigcup_{i=1}^n f(i)=\{1,2,\ldots,n\}$.\par
(2) is also satisfied because we can restrict the mapping $f$ to $\{1,2,\ldots,n\}-\{i\}$ if necessary.\par
(3) is a consequence of (1), (2) and the condition $\bigcup_{i=1}^n f(i)=\{1,2,\ldots,n\}$.\par
For (4), note that $f \in S_n$ because of (3). Suppose $f$ is not transitive. Then choose an orbit $(i_1\ldots i_m)$ of $f$ ($m<n$) and restrict both $f$ and the partition $I_1,\ldots,I_k$ to $\{i_1,\ldots,i_m\}$.

In conclusion, we obtain:
\begin{prop}
It is sufficient to consider $f\in S_n$ and $f$ is transitive in Conjecture 1.4.
\end{prop}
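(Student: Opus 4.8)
The plan is to prove the logically equivalent contrapositive: every counterexample to Conjecture 1.4 gives rise to a counterexample in which $f$ is a transitive permutation. Here I call a pair $(f,P)$, with $P$ the partition into $k$ blocks, a \emph{counterexample} if no block contains a pair $\{r,s\}$ with $(convf)^l(\{r,s\})\supseteq\{r,s\}$ for some $l\le k$. The reductions (1)--(4) listed above are exactly the four steps of this process, and what a rigorous proof must supply is, for each step, (a) that the transformed map still satisfies the hypotheses ($\bigcup_i f(i)=\{1,\dots,n\}$ and a partition into at most $k$ consecutive blocks) and (b) that it remains a counterexample.

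The engine for steps (1)--(3) is a monotonicity lemma for the operator $convf$: if $g(i)\subseteq f(i)$ for every $i$, then $(convg)^l(A)\subseteq(convf)^l(A)$ for all finite $A$ and all $l$. This follows by induction, using that $g(A)\subseteq f(A)$ forces $convg(A)\subseteq convf(A)$ (the convex hull is determined by $\min$ and $\max$) together with the monotonicity of $convf$ in its argument. The immediate consequence is that shrinking images can only destroy coverings, hence preserves the property of being a counterexample. This justifies (1): for each $x$, keep $x$ in exactly one of the images $f(i)$ that contain it, obtaining disjoint images with the same union, still a counterexample.

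After (1) the images are disjoint and cover $\{1,\dots,n\}$, so the ``origin'' map $h$ sending each $x$ to the unique index $i$ with $x\in f(i)$ is a well-defined self-map of $\{1,\dots,n\}$; it is a bijection precisely when no image is empty, in which case $f=h^{-1}$ is already a permutation (this is the content of (2)--(3)). In general I would pass to an $f$-invariant subset $C$ on which $f$ becomes a transitive permutation -- for (4), a single cycle of the permutation obtained after the empty indices are discarded -- relabel $C$ order-isomorphically as $\{1,\dots,|C|\}$ by a map $\phi$, and carry the partition over to at most $k$ consecutive blocks. That this restriction preserves counterexamples is the heart of the argument, and I isolate it as the following claim: writing $\hat f=\phi f\phi^{-1}$, one has $(conv\hat f)^l(\phi(A))\subseteq\phi\big((convf)^l(A)\cap C\big)$ for every $A\subseteq C$ and every $l$; consequently any $\hat f$-covering of a pair $\{\phi(r),\phi(s)\}$ pulls back to an $f$-covering of $\{r,s\}$ with the same $l$, so if the original is a counterexample so is its restriction.

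The main obstacle is precisely this restriction claim. The difficulty is that $convf$ fills in all integers between the minimum and maximum of an image, and these filled-in points may lie outside $C$; once we pass to $C$ and relabel, the gaps are compressed, so the convex hull computed in the small ground set need not match the image of the hull computed in the large one. The proof of the claim is an induction in which one intersects with $C$ before relabeling at each step, and checks that this truncation can only shrink the iterated hull -- using that $\phi$ is an order isomorphism, so a contiguous range in $C$ maps to a contiguous range in $\{1,\dots,|C|\}$, and that $f$ maps $C$ into $C$. By contrast, the monotonicity lemma and the counting that turns disjoint nonempty images into singletons are routine. Chaining (1)--(4) then converts an arbitrary counterexample into a transitive-permutation one, which is the desired reduction.
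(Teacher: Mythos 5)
Your proposal is correct and takes essentially the same route as the paper: the paper's proof consists precisely of the chain of reductions (1)--(4), each justified in one line (strip overlaps to make the images disjoint, discard indices with empty images, conclude $f$ is a permutation by counting, restrict to a single orbit together with the induced partition). Your two supporting lemmas --- monotonicity of $(convf)^l$ under shrinking images, and the compatibility $(conv\hat f)^l(\phi(A))\subseteq\phi\bigl((convf)^l(A)\cap C\bigr)$ for the order-isomorphic relabeling of an $f$-invariant subset --- are exactly the verifications that the paper's terse proof leaves implicit, and they hold by the inductions you sketch.
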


\begin{figure}[ht]
    \centering
    \includegraphics[width=\textwidth]{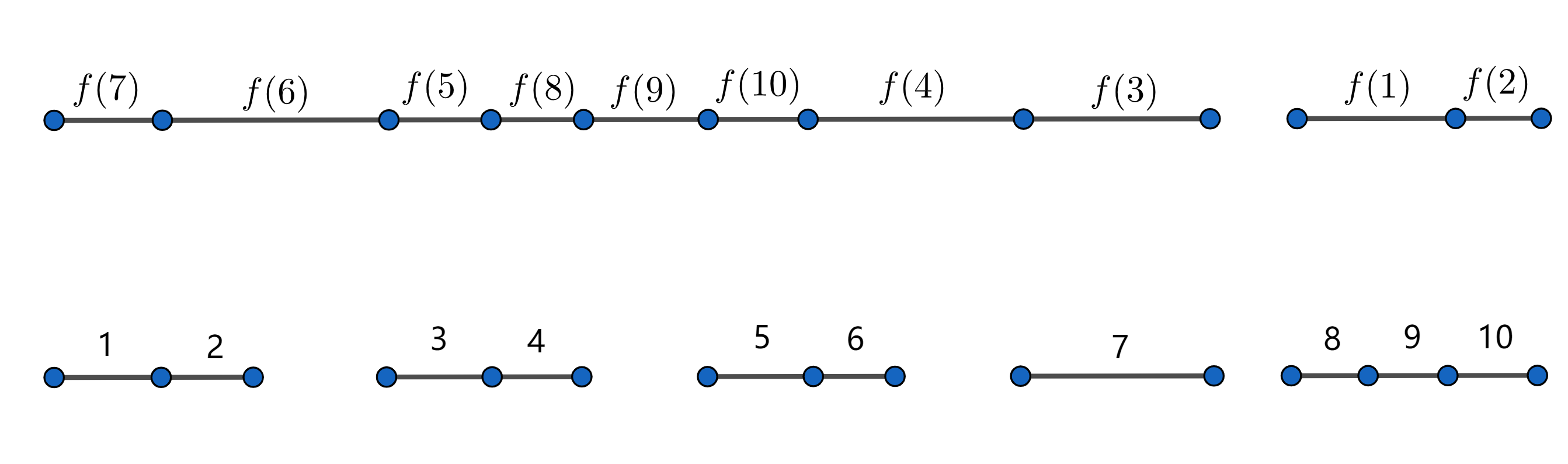}
    \caption{Example 3.3}
    \label{fig:my_label}
\end{figure}

\begin{exmp}
We explain in this example how to obtain a discrete mapping in Conjecture 1.4.\par
For the covering system in Example 2.7 (Figure 3), we cut the initial intervals into ten pieces:
$$I_1=\{1,2\},\ I_2=\{3,4\},\ \ldots,\ I_5=\{8,9,10\}.$$
And the image of each piece under $f$ happens to be some other pieces. (See Figure 5.) Therefore $f$ can be regarded as a discrete mapping as follows:
$$f(1)=\{8,9\},\ f(2)=\{10\},\ f(3)=7,\ f(4)=6,\ \ldots,\ f(9)=\emptyset,\  f(10)=\{5\}.$$
Moreover, $f$ can be reduced to a transitive permutation in $S_9$ as $(1\to8\to4\to6\to2\to10\to5\to3\to7\to1)$. And $I_5$ reduces to $\{8,10\}$, respectively. Note here that if we follow the discussion at the beginning of this section, there will be no need for a point between 8 and 9, since it is not the image of the initial end points under any $f^i$. We add this point in order to show how we reduce the mapping $f$ to a transitive permutation in general cases.

\end{exmp}

Now we turn to the proof of the equivalence of Conjecture 1.7 and Conjecture 1.4. (See also Definition 1.6.)

\begin{prop}
Conjecture 1.7 implies Conjecture 1.4.
\end{prop}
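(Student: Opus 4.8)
The plan is to invoke the reduction already established and then run a short counting argument. By Proposition 3.2 it suffices to verify the conclusion of Conjecture 1.4 when $f$ is a transitive permutation in $S_n$, so I would assume this from the outset, together with a fixed partition $I_1,\ldots,I_k$ into blocks of consecutive integers. The crucial observation is that although Conjecture 1.4 permits an arbitrary pair $r,s\in I_j$, it is enough to produce a \emph{consecutive} pair $\{i,i+1\}$ lying entirely inside one block: such a pair is exactly one of the sets $A_i$ of Definition 1.6, and its characteristic number $m_i$ is precisely the least $l$ with $(convf)^l(\{i,i+1\})\supseteq\{i,i+1\}$. Thus the whole statement reduces to exhibiting one ``internal'' consecutive pair whose characteristic number is at most $k$.

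Next I would classify the $n-1$ pairs $A_1,\ldots,A_{n-1}$. Call $A_i$ a \emph{boundary pair} if $i$ and $i+1$ lie in different blocks, and \emph{internal} otherwise. Since the block boundaries sit exactly at the indices $i_1<i_2<\cdots<i_{k-1}$, there are exactly $k-1$ boundary pairs, hence $n-k$ internal ones. Only the internal pairs furnish admissible choices of $\{r,s\}$ for Conjecture 1.4, so the goal becomes: at least one internal $A_i$ satisfies $m_i\le k$.

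The conclusion then follows by pigeonhole from Conjecture 1.7. Writing the characteristic sequence as $m_1'\le\cdots\le m_{n-1}'$, the hypothesis $m_i'\le i$ gives in particular $m_k'\le k$, so at least $k$ of the numbers $m_1,\ldots,m_{n-1}$ are $\le k$. As at most $k-1$ of these can belong to boundary pairs, at least one internal pair $A_i=\{i,i+1\}$ must have $m_i\le k$. Taking $r=i$, $s=i+1$ (which lie in a common block $I_j$) and $l=m_i$ yields $(convf)^l(\{r,s\})\supseteq\{r,s\}$ with $l\le k$, which is exactly the assertion of Conjecture 1.4.

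Finally I would dispose of the degenerate case $n=k$, in which every block is a singleton and there are no internal pairs: here transitivity of $f$ already gives $f^k(j)=j$, so $(convf)^k(\{j\})=\{f^k(j)\}=\{j\}$ and one may take $r=s=j$. The argument is otherwise a clean counting step, so I do not expect a serious obstacle; the only points requiring care are the bookkeeping that the boundaries contribute exactly $k-1$ pairs—so that the strict inequality $k>k-1$ forces an internal pair into the list of $k$ pairs with small characteristic number—together with the implicit fact, already built into the statement of Conjecture 1.7, that each $m_i$ is finite for transitive $f$.
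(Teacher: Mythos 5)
Your proposal is correct and follows essentially the same route as the paper's own proof: reduce to a transitive permutation via Proposition 3.2, note that Conjecture 1.7 forces at least $k$ of the characteristic numbers $m_1,\ldots,m_{n-1}$ to be $\le k$, and conclude by pigeonhole that one of them belongs to a pair $A_t=\{t,t+1\}$ not sitting at a block boundary, which yields the required $r,s,l$. Your explicit treatment of the degenerate case $n=k$ (all blocks singletons, handled via $f^k=\mathrm{id}$ and $r=s$) is a small point of extra care that the paper's argument silently passes over, but it does not change the substance of the proof.
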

\begin{proof}
As mentioned above, we can assume $f\in S_n$ is transitive, without loss of generality. \par
Suppose Conjecture 1.7 is true. Then for any partition 
$$I_1=\{1,2,\ldots,i_1\},\quad I_2=\{i_1+1,\ldots,i_2\},\quad \ldots,\quad I_k=\{i_{k-1}+1,...,n\},$$
the $(k-1)$ characteristic numbers $\{m_{i_1},m_{i_2},\ldots,m_{i_{k-1}}\}$ cannot cover $\{m_1',\ldots,m_k'\}$. Therefore, $\exists t\neq i_1,\ldots,i_{k-1}$, such that $m_t\le k$, since $m_1'\le \ldots\le m_k'\le k$ in the characteristic sequence. In other words, 
$$(convf)^{m_t}(A_t)\supseteq A_t,\quad where\quad A_t=\{t,t+1\}\subseteq some\ I_j.$$
Therefore, we can just choose $r=t,\ s=t+1$ in Conjecture 1.4.\par
\end{proof}

Recall that in one dimensional dynamic systems we often consider a directed graph associated to a periodic orbit (For more details, see \cite{Uhland1982Interval} or chapter 1 of \cite{onedimdynamics1992}). Namely, if $f$ is a transitive permutation in Definition 1.6, we can construct a directed graph $\Gamma_f$ with $(n-1)$ vertices $\{A_1,A_2,\ldots,A_{n-1}\}$, where $A_i$ is defined in 1.6. And there is an edge from $A_i$ to $A_j$ if and only if $convf(A_i)\supseteq A_j$. Thus, we obtain:

\begin{prop}
For a transitive $f\in S_n$ and its associated directed graph $\Gamma_f$, the characteristic number $m_i$ is equal to the minimal length of cycles starting from $A_i$ in $\Gamma_f$.
\end{prop}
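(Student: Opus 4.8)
Looking at Proposition 3.6, I need to prove that the characteristic number $m_i$ equals the minimal length of cycles starting from $A_i$ in the directed graph $\Gamma_f$.

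The plan is to unwind both definitions and show they describe the same quantity. Recall that $m_i = \min\{m \mid (convf)^m(A_i) \supseteq A_i\}$, while the graph $\Gamma_f$ has an edge $A_p \to A_q$ precisely when $convf(A_p) \supseteq A_q$. The key observation I would establish first is a ``multiplicativity'' or ``reachability'' lemma: for any $A_i$, there is a directed path $A_i \to A_{j_1} \to \cdots \to A_{j_{l}}$ of length $l$ in $\Gamma_f$ if and only if $(convf)^{l}(A_i) \supseteq A_{j_l}$. This is the heart of the argument, and I expect it to require a short induction on $l$.

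First I would prove the forward direction of this reachability lemma. Suppose a path $A_i \to A_{j_1} \to \cdots \to A_{j_l}$ exists. By the edge definition, $convf(A_i) \supseteq A_{j_1}$. I would then argue that $convf$ is monotone with respect to containment on these sets: if $A \supseteq B$ then $convf(A) \supseteq convf(B)$, since $f(A) \supseteq f(B)$ forces $\min f(A) \le \min f(B)$ and $\max f(A) \ge \max f(B)$, hence the convex hulls nest. Applying $convf$ to $convf(A_i) \supseteq A_{j_1}$ and using the next edge $convf(A_{j_1}) \supseteq A_{j_2}$, monotonicity gives $(convf)^2(A_i) \supseteq convf(A_{j_1}) \supseteq A_{j_2}$. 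Iterating yields $(convf)^l(A_i) \supseteq A_{j_l}$, completing the induction.

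For the converse I need to show that whenever $(convf)^l(A_i) \supseteq A_{j}$, there is a path of length $l$ from $A_i$ to $A_{j}$. Here I would use that each $A_p = \{p, p+1\}$ consists of two adjacent integers, so $(convf)^{l-1}(A_i)$ is itself a convex set $\{\min, \ldots, \max\}$ of consecutive integers; writing it as a union of the adjacent pairs $A_p$ it contains, the containment $(convf)^l(A_i) = convf\big((convf)^{l-1}(A_i)\big) \supseteq A_j$ together with the fact that $convf$ of a union equals the convex hull of the union of images lets me locate a single pair $A_p \subseteq (convf)^{l-1}(A_i)$ with $convf(A_p) \supseteq A_j$, i.e.\ an edge $A_p \to A_j$; by the inductive hypothesis there is a path of length $l-1$ from $A_i$ to $A_p$, and appending this edge gives the desired path. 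The main obstacle I anticipate is making this backward step fully rigorous: I must verify that the ``last'' pair $A_p$ responsible for covering $A_j$ really lies in the convex set $(convf)^{l-1}(A_i)$ and that the monotonicity bookkeeping is tight for convex hulls of unions rather than single sets.

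Finally, I would combine the two directions by specializing $A_j = A_i$. The reachability lemma then states that $(convf)^l(A_i) \supseteq A_i$ holds if and only if there is a directed path of length $l$ from $A_i$ back to itself, i.e.\ a cycle of length $l$ through $A_i$ in $\Gamma_f$. Taking the minimum over all such $l$ on both sides, $m_i = \min\{l \mid (convf)^l(A_i) \supseteq A_i\}$ equals the minimal length of a cycle starting at $A_i$, which is exactly the claim.
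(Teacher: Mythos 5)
Your proposal is correct, but note that the paper offers no proof of this proposition at all: it is stated immediately after the definition of $\Gamma_f$ (``Thus, we obtain:''), being treated as the standard correspondence between covering relations and paths in the Markov graph of a periodic orbit, for which the paper points to the dynamics literature. So your argument fills in a proof the paper omits rather than rederiving one. Your reachability lemma --- there is a directed path of length $l$ from $A_i$ to $A_j$ in $\Gamma_f$ if and only if $(convf)^l(A_i)\supseteq A_j$ --- is exactly the right statement, specializing to the proposition when $A_j=A_i$, and your forward direction (induction plus monotonicity of $convf$ under inclusion) is airtight.

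The one place where your sketch, as written, does not yet suffice is the step you yourself flag: the identity ``$convf$ of a union is the convex hull of the union of the images'' does not by itself produce a single pair $A_p\subseteq C:=(convf)^{l-1}(A_i)$ with $convf(A_p)\supseteq A_j$, since a priori $j$ could lie in the image of one pair and $j+1$ in the image of a different pair, with no single pair covering both. The step is rescued by a discrete intermediate-value argument: $C$ is a set of consecutive integers, so choose $u,v\in C$ with $f(u)=\min f(C)\le j$ and $f(v)=\max f(C)\ge j+1$; among the integers between $u$ and $v$ (all of which lie in $C$), let $w$ be the last one, walking from $u$ toward $v$, with $f(w)\le j$. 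Its neighbor $w'$ on that walk satisfies $f(w')\ge j+1$, so the adjacent pair $\{w,w'\}$ lies in $C$ and $convf(\{w,w'\})\supseteq\{j,j+1\}=A_j$, giving the desired edge. (Equivalently: the intervals $convf(A_p)$ of consecutive pairs overlap, since $A_p$ and $A_{p+1}$ share the point $p+1$, and a chain of overlapping intervals covering both $j$ and $j+1$ must contain a link covering both.) You also use tacitly that $C$ contains at least one adjacent pair, i.e.\ $|C|\ge 2$; this is guaranteed because $f$, being a permutation, is injective, so $convf$ never collapses a two-point set to a single point. With these two details supplied, your proof closes, and it is the natural argument for the fact the paper takes for granted.
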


Using directed graphs, we can complete the proof of Theorem 1.8.

\begin{proof}[Proof of Theorem 1.8]
Suppose Conjecture 1.4 is true. For a transitive $f\in S_n$,  suppose on the contrary that there is a $k\le n-1$ such that $m_k'\geq k+1$, and $k$ is the minimal number with this property. Then there are only $(k-1)$ characteristic numbers $m_{i_1},m_{i_2},\ldots,m_{i_{k-1}}\le k$. Without loss of generality, assume $i_1<\ldots<i_{k-1}$. And consider the partition:
$$I_1=\{1,2,\ldots,i_1\},\quad I_2=\{i_1+1,\ldots,i_2\},\quad \ldots,\quad I_k=\{i_{k-1}+1,\ldots,n\}.$$
By Conjecture 1.4, we can find $j,l\le k$ and $r,s\in I_j$ such that
$$(convf)^l(\{r,s\})\supseteq \{r,s\}.$$
Now if we extend $f$ to be a piecewise linear mapping from the interval $[1,n]$ to $[1,n]$, then we will obtain $f^l([r,s])\supseteq [r,s]$. Therefore, there exists $x_0\in [r,s]$ with $f^l(x_0)=x_0$. Note that since $f$ is transitive as a permutation in $S_n$, $x_0$ cannot belong to $\{1,2,\ldots,n\}$. Consequently, $x_0\in (t,t+1)$ for some integer $t\in [r,s]$  and it is similar for $f(x_0),f^2(x_0),\ldots,f^{l-1}(x_0)$. Since $f$ is piecewise linear, $x_0\in (t,t+1)$, $f(x_0)\in (t',t'+1)$ imply $f([t,t+1])\supseteq [t',t'+1]$, and it is similar for other pairs of points. So the orbit $\{x_0,f(x_0),\ldots,f^{l-1}(x_0),f^l(x_0)\}$ gives a cycle of length $l\le k$ starting from the vertex $A_t$ in the associated directed graph $\Gamma_f$, which implies $m_t\le k$. But this $t$ together with $i_1,\ldots,i_{k-1}$ gives $k$ characteristic numbers $\le k$, contradicting $m_{k}'\geq k+1$.\par
Combined with Proposition 3.4, the proof is completed.
\end{proof}

\begin{rem}
(1) The condition that $f$ is transitive cannot be removed in Conjecture 1.7. Because if $f\in S_3$ and $f(1)=3,\ f(2)=2,\ f(3)=1$, then the characteristic numbers will be $m_1=m_2=2$.\par
(2) $\{m\arrowvert\ (convf)^m(A_i)\supseteq A_i\}$ cannot be replaced by $\{m\arrowvert (f^m(i)-i)(f^m(i+1)-(i+1))<0\}$ in the definition of the characteristic number in Conjecture 1.7. As an example, consider $f$: $(1\to 2\to 4\to 3\to 1)$. Then the characteristic numbers will become $m_1=m_3=3,\ m_2=1$, if we don't take convex hull in each step.  
\end{rem}

We summarize what we have done. We show in Theorem 1.5 that the initial problem on existence of periodic points in a covering system follows from a discrete one. And we only need to focus on the transitive permutations by Proposition 3.2. Finally, Conjecture 1.7 and Theorem 1.8 tell us that it is sufficient to study the characteristic sequence of a transitive permutation, which is a property depending only on the permutation itself.

At the end of this section, we give some examples of periodic orbits whose characteristic sequences are known.
\begin{exmp}
(1) Let $f\in S_n$ be $(1\to 2\to \ldots\to n\to 1)$. Then the characteristic sequence of $f$ is $1\le 2\le \ldots\le n-1$.\par
(2) Let $f\in S_{2n+1}$ be of Stefan type (see \cite{P1977A} for more details), that is, 
$$1\to (n+1)\to (n+2)\to n\to (n+3)\to (n-1)\to\ldots\to 2n \to 2\to (2n+1)\to1.$$
Then the characteristic sequence of $f$ is $$1\le 2\le 2\le 4\le 4\le\ldots\le 2n-2\le 2n-2\le 2n.$$\par
(3) It has been checked by computer program that Conjecture 1.7 holds for all $n\le 12$. And in \cite{10.2307/2690145}, the author gives all the directed graph associated to transitive permutations in $S_5$.
\end{exmp}

\section*{Acknowledgement}
I would like to thank Prof. S.A. Bogatyi for introducing the problem to me. And I thank my supervisor, Prof. Tian Gang, for his encouragement as well as helpful suggestions. I'm also grateful to Dr. Ye Yanan and Dr. Zhao Yikai for writing computer programs to verify some claims.

\bibliographystyle{unsrt}
\bibliography{ref}

\end{document}